\documentclass[11pt]{article}% Math and Physical Sciences Reference Style
\pdfoutput=1 % Supposed to make it so hyperlinks work arxiv-compiled PDF
%% The amssymb package provides various useful mathematical symbols
\usepackage{amssymb}
\usepackage[margin=1in]{geometry}
%% The amsthm package provides extended theorem environments
%% \usepackage{amsthm}

%% The lineno packages adds line numbers. Start line numbering with
%% \begin{linenumbers}, end it with \end{linenumbers}. Or switch it on
%% for the whole article with \linenumbers.
\usepackage{lineno}
\usepackage{graphicx}

\usepackage{moreverb,url}
\usepackage{makecell}
% \biboptions{sort&compress}
\usepackage[breaklinks=true, colorlinks=true, linkcolor={blue!90!black}, citecolor={blue!90!black}, urlcolor={blue!90!black}]{hyperref}

\newcommand\BibTeX{{\rmfamily B\kern-.05em \textsc{i\kern-.025em b}\kern-.08em
T\kern-.1667em\lower.7ex\hbox{E}\kern-.125emX}}

\newcommand{\tn}{\textnormal}

% Below are my additional commands
% \usepackage[ruled,vlined]{algorithm2e}
\usepackage[algo2e,linesnumbered,vlined,algoruled]{algorithm2e}

\usepackage{placeins}
\SetCommentSty{mycommfont}

\usepackage{geometry,xcolor,tabularx,nccmath}
\usepackage{amsmath,amsthm,amssymb,enumitem}
\usepackage{booktabs,caption,subcaption,mathtools,multirow}
\usepackage{graphicx,tabulary}

\setlength{\heavyrulewidth}{3\lightrulewidth}
\setlength{\abovetopsep}{1ex}
\usepackage[utf8]{inputenc}
\usepackage{amsfonts}
\usepackage{fullpage}

\newcommand{\minimize}{\operatornamewithlimits{minimize}}
\newcommand{\maximize}{\operatornamewithlimits{maximize}}

\newcommand{\cC}{\mathcal{C}}

\newcommand{\cE}{\mathcal{E}}

\newcommand{\cM}{\mathcal{M}}

\newcommand{\cP}{\mathcal{P}}

\newcommand{\cR}{\mathcal{R}}

\newcommand{\cV}{\mathcal{V}}
\newcommand{\f}{f_{u^*,v^*,i^*,j^*}}

\newcommand{\bx}{\mathbf{x}}
\newcommand{\Tbatch}{B}

\setlength {\marginparwidth }{2cm} % can remove once todonotes removed
\usepackage[colorinlistoftodos]{todonotes}

\usepackage{placeins}
\usepackage{etoolbox}

\newtheorem{lemma}{Lemma}

% \makeatletter
% % start with some helper code
% % This is the vertical rule that is inserted
% \newcommand*{\algrule}[1][\algorithmicindent]{%
%   \makebox[#1][l]{%
%     \hspace*{.2em}% <------------- This is where the rule starts from
%     \vrule height .75\baselineskip depth .25\baselineskip
%   }
% }

% \newcount\ALG@printindent@tempcnta
% \def\ALG@printindent{%
%     \ifnum \theALG@nested>0% is there anything to print
%     \ifx\ALG@text\ALG@x@notext% is this an end group without any text?
%     % do nothing
%     \else
%     \unskip
%     % draw a rule for each indent level
%     \ALG@printindent@tempcnta=1
%     \loop
%     \algrule[\csname ALG@ind@\the\ALG@printindent@tempcnta\endcsname]%
%     \advance \ALG@printindent@tempcnta 1
%     \ifnum \ALG@printindent@tempcnta<\numexpr\theALG@nested+1\relax
%     \repeat
%     \fi
%     \fi
% }
% % the following line injects our new indent handling code in place of the default spacing
% \patchcmd{\ALG@doentity}{\noindent\hskip\ALG@tlm}{\ALG@printindent}{}{\errmessage{failed to patch}}
% \patchcmd{\ALG@doentity}{\item[]\nointerlineskip}{}{}{} % no spurious vertical space
% % end vertical rule patch for algorithmicx

% \def\NoNumber#1{{\def\alglinenumber##1{}\State #1}\addtocounter{ALG@line}{-1}}
% \makeatother

\usepackage[nameinlink]{cleveref}
\hypersetup{colorlinks={true},allcolors={blue}}

\crefformat{section}{\S#2#1#3}
\crefformat{subsection}{\S#2#1#3}
\crefformat{equation}{#2Equation~\textup{(#1)}#3}
% by default eq. (<eq>) is used
\crefformat{cons}{#2\textbf{Constraint}~\textup{\textbf{(#1)}}#3}
\labelcrefformat{cons}{\textup{#2(#1)#3}}
\crefformat{consset}{#2\textbf{Constraints}~\textup{\textbf{(#1)}}#3} % I'd like plural ``constraint sets (5)-(8)'' instead of ``constraint set (5)-(8)'' but don't know how to do it. Is this okay?
\labelcrefformat{consset}{\textup{\textbf{#2(#1)#3}}}
\crefformat{func}{#2Function~\textup{(#1)}#3}
\labelcrefformat{func}{#2Function~\textup{(#1)}#3}
\crefformat{algo}{#2\textbf{Algorithm}~\textup{\textbf{#1}}#3}
\crefformat{objfunc}{#2\textbf{Objective function}~\textup{\textbf{(#1)}}#3}
\labelcrefformat{objfunc}{\textup{(#2#1#3)}}
\crefformat{ch}{#2Chapter~\textup{#1}#3}
\crefformat{sec}{#2Section~\textup{#1}#3}
\crefformat{tab}{#2Table~\textup{#1}#3}
\crefformat{fig}{#2Figure~\textup{#1}#3}
\crefformat{eq}{#2equation~\textup{(#1)}#3}
\labelcrefformat{eq}{\textup{(#2#1#3)}}
\crefformat{theo}{#2Theorem~\textup{#1}#3}
\crefformat{lem}{#2Lemma~\textup{#1}#3}
\crefformat{app}{#2Appendix~\textup{#1}#3}
\crefformat{scenario}{#2Scenario~\textup{#1}#3}
 \AtBeginDocument{%
    \crefname{figure}{Figure}{figures}%
}

\expandafter\def\expandafter\UrlBreaks\expandafter{\UrlBreaks%  save the current one
  \do\a\do\b\do\c\do\d\do\e\do\f\do\g\do\h\do\i\do\j%
  \do\k\do\l\do\m\do\n\do\o\do\p\do\q\do\r\do\s\do\t%
  \do\u\do\v\do\w\do\x\do\y\do\z\do\A\do\B\do\C\do\D%
  \do\E\do\F\do\G\do\H\do\I\do\J\do\K\do\L\do\M\do\N%
  \do\O\do\P\do\Q\do\R\do\S\do\T\do\U\do\V\do\W\do\X%
  \do\Y\do\Z}

\DeclareMathOperator*{\argmin}{arg\,min}

\usepackage{amsmath,cases,eqparbox,xparse}

% https://tex.stackexchange.com/a/34412/5764
\makeatletter
\NewDocumentCommand{\eqmathbox}{o O{c} m}{%
  \IfValueTF{#1}
    {\def\eqmathbox@##1##2{\eqmakebox[#1][#2]{$##1##2$}}}
    {\def\eqmathbox@##1##2{\eqmakebox{$##1##2$}}}
  \mathpalette\eqmathbox@{#3}
}
\makeatother

\makeatletter
\let\@msm@th@eqref\eqref
\renewcommand{\eqref}[1]{%
  \begingroup
  \leavevmode
  \color{blue}%
  \hypersetup{linkbordercolor=[named]{blue}}%
  \@msm@th@eqref{#1}%
  \endgroup
}
\makeatother

\usepackage[normalem]{ulem} % For \sout
% \newcommand{\replace}[2]{{\color{black!50!green}\color{black}{\color{black!50!green}#2\color{black}}}}
 %TeX source markup.
 
 %TeX source markup.
%END MY ADDITIONS

% \usepackage{refcheck}
% %%% Infrastructure
% \makeatletter
% \newcommand{\refcheckize}[1]{%
%   \expandafter\let\csname @@\string#1\endcsname#1%
%   \expandafter\DeclareRobustCommand\csname relax\string#1\endcsname[1]{%
%     \csname @@\string#1\endcsname{##1}\wrtusdrf{##1}}%
%   \expandafter\let\expandafter#1\csname relax\string#1\endcsname
% }
% \makeatother
% %%% Now we add the reference commands we want refcheck to be aware of
% \refcheckize{\cref}
% \refcheckize{\Cref}

%% as per the requirement new theorem styles can be included as shown below
%  meant for continuous numbers
% \newtheorem{lemma}[theorem]{Lemma}
%%\newtheorem{theorem}{Theorem}[section]% meant for sectionwise numbers
%% optional argument [theorem] produces theorem numbering sequence instead of independent numbers for Proposition
%
%%\newtheorem{proposition}{Proposition}% to get separate numbers for theorem and proposition etc.

%
%

%

\raggedbottom
%%\unnumbered% uncomment this for unnumbered level heads

 % inner command, used by \rchi

\crefname{defi}{definition}{definitions}
\Crefname{defi}{Definition}{Definitions}
\crefname{lemma}{lemma}{lemmas}
\Crefname{lemma}{Lemma}{Lemmas}
\crefname{assumption}{assumption}{assumptions}
\Crefname{assumption}{Assumption}{Assumptions}

\usepackage{multicol}

\usepackage{authblk}
\providecommand{\keywords}[1]{\noindent\textbf{Keywords:} #1}

\usepackage{natbib}

\begin{document}

\title{Large-Scale Dynamic Ridesharing with Iterative Assignment}

%%=============================================================%%
%% Prefix	-> \pfx{Dr}
%% GivenName	-> \fnm{Joergen W.}
%% Particle	-> \spfx{van der} -> surname prefix
%% FamilyName	-> \sur{Ploeg}
%% Suffix	-> \sfx{IV}
%% NatureName	-> \tanm{Poet Laureate} -> Title after name
%% Degrees	-> \dgr{MSc, PhD}
%% \author*[1,2]{\pfx{Dr} \fnm{Joergen W.} \spfx{van der} \sur{Ploeg} \sfx{IV} \tanm{Poet Laureate}
%%                 \dgr{MSc, PhD}}\email{iauthor@gmail.com}
%%=============================================================%%

\author[1,2]{Akhil Vakayil}

\affil[1]{Argonne National Laboratory, 9700 S. Cass Ave., Lemont, IL 60439, USA }
\author[1]{Felipe de Souza}
\author[1,3]{Taner Cokyasar}
\author[1]{\\Krishna Murthy Gurumurthy}
\author[1]{Jeffrey Larson}

\affil[2]{Georgia Institute of Technology, North Ave. NW, Atlanta, GA 30332, USA }
\affil[3]{TrOpt R\&D, Balcali mah., Saricam, 01330, Adana, Turkey}

\maketitle

%%==================================%%
%% sample for unstructured abstract %%
%%==================================%%

\begin{abstract}
Transportation network companies (TNCs) have become a highly utilized transportation mode over the past years. At their emergence, TNCs were serving ride requests one by one. However, the economic and environmental benefits of ridesharing encourages them to dynamically pool multiple ride requests to enable people to share vehicles. In a dynamic ridesharing (DRS) system, a fleet operator seeks to minimize the overall travel cost while a rider desires to experience a faster (and cheaper) service. While the DRS may provide relatively cheaper trips by pooling requests, the service speed is contingent on the objective of the vehicle-to-rider assignments. Moreover, the operator must quickly assign a vehicle to requests to prevent customer loss. In this study we develop an iterative assignment (IA) algorithm with a balanced objective to conduct assignments quickly. A greedy algorithm from the literature is also tailored 
to further reduce the computational time. The IA was used to measure the impact on service quality of fleet size; assignment frequency;  the weight control parameter of the two objectives on vehicle occupancy---rider wait time and vehicle hours traveled. A case study in Austin, TX, reveals that the key performance metrics are the most sensitive to the weight parameter in the objective function.
\end{abstract}

\keywords{
Large-scale ridesharing, operator- and customer-centric ridesharing, optimization}

\section{Introduction}\label[sec]{introduction}
Ridesharing commonly refers to two or more simultaneous trips being carried out in a
single vehicle. This increase in vehicle occupancy is often encouraged to reduce the
number of vehicles on roads~\citep{morency2007ambivalence}. Apart from reducing
traffic congestion, ridesharing can also decrease costs for both customers and vehicle
operators. Hence, transportation network companies (TNCs) offer
pooling options, such as Uber Pool and Lyft Line~\citep{ryan}. A recent
survey reveals that a large majority of riders prefer these pooled
services to their single-ride counterparts because they are ``cheaper" and
``better for the environment"~\citep{sarriera2017share}. To maximize these
benefits, it is essential to have a system-level vehicle-to-rider matching approach that optimizes 
certain objectives, such as minimizing total travel time or delay time.

In a dynamic ridesharing (DRS) environment, customers make ride requests through their smartphones, 
and the TNC operator must \emph{quickly} assign these requests
to vehicles that are either idle or currently serving other requests but with available seats~\citep{AGATZ20111450}. These
assignments must respect a predefined maximum time within which a rider has
to be picked up, as well as the maximum allowed rerouting delay for each rider
currently being served.
Pick-up and drop-off concerns constrain  the TNC operator's possible assignment combinations.
(If a request is not promptly served, customers may
reconsider an alternative transportation mode.)
In large-scale transportation
networks, assigning requests to vehicles quickly while accounting for operator
and rider benefits is burdensome; the main motivation in this study is to
address such limitations. Although making assignments in real time may be a simple approach,
a batched assignment approach in which rider-to-vehicle
assignments are carried out at predetermined time intervals can be
potentially more beneficial to both TNC operators and riders. The reason is that batching may enable
greater coordination of pick-ups and drop-offs.

In this study we propose an iterative assignment \texttt{IA} algorithm to 
assign rider requests to TNC vehicles at discrete time intervals. In each
batching interval, we solve linear programs (LPs) iteratively to minimize two
objectives: total TNC fleet travel time and total customer delay time.
The first objective approaches the problem from a fleet operator standpoint;
the second objective considers a customer-centric assignment.
This approach eliminates the need to solve integer programs (IPs), which are often more time-consuming to solve compared with their linear programming relaxations.
Apart from the \texttt{IA} algorithm, we tailor a greedy
real-time algorithm that assigns requests to vehicles as soon as they emerge in
the communication network \citep{Gurumurthy2020}. The greedy algorithm has a considerably shorter 
time-to-solution and serves as a benchmark to the \texttt{IA}. 
Our numerical tests focus on key parameters, including the time required to solve each problem, 
TNC fleet size, the number of vehicles considered for a given request, the trade-off in the objective between customer-centric and operator-oriented approaches, and the batching interval duration. We provide detailed 
analysis to highlight the impact of these parameters on essential solution metrics, namely, vehicle occupancy and 
response time.

% \jlnote{Most of these have already been discussed, so I'm cutting them}
% Our contributions can be listed as follows.
% \begin{itemize}
%     \item We present the \texttt{IA} algorithm that assigns ride requests to
%       TNC vehicles at predetermined time intervals. The approach eliminates
%       the necessity of solving integer programs (IPs), which are often more time-consuming to solve compared to their linear programming (LP) relaxations.
%     \item We develop a greedy real-time algorithm as a benchmark to the
%       \texttt{IA}.
%     \item We approach to the problem from both operator and customer point of
%       views and solve numerical instances to shed light on the trade-off.
%     \item We focus on key parameters, such as the time required to solve each problem, 
%       TNC fleet size, and the number of vehicles considered
%       for a given request, and provide detailed analysis to highlight their
%       impact on essential solution metrics: namely, vehicle occupancy and
%       response time.
% \end{itemize}

Both the \texttt{IA} and the greedy approaches have been implemented as components of POLARIS, the Planning and Operations
Language for Agent-based Regional Integrated
Simulation~\citep{auld2016polaris} software. POLARIS is an agent-based simulation tool
developed by Argonne National Laboratory's Vehicle and Mobility Systems Department
and is used to quantify the impact of emerging and existing vehicle and
transportation technologies on a variety of metrics, such as vehicle miles
traveled, energy consumed, and greenhouse gas emitted in large metropolitan
areas. Time-to-solution becomes a vital
determinant of the methodology derivation process because the study is devoted
to large-scale DRS, and the solution time of this approach should not pose a
bottleneck in the POLARIS tool.

The outline of the manuscript is as follows. \cref{litrev} provides a thorough
literature review while comparing the solution approaches. \cref{method}
formally describes the problem, assumptions, notations, and solution
approaches. \cref{experiments} contains numerical experiments that measure the
impact of batching interval and the trade-off between the operator-centric and
customer-centric perspectives. We also study the effect of fleet size on key solution metrics
and the impact of number of vehicles considered for each request in 
large-scale computational experiments. \cref{conclusion} summarizes the
work and displays the future of the problem from our perspective.

% In this work, we are inclined toward the computational efficiency of matching
% over it's optimality with a reasonable trade off. This is because the dynamic
% ride sharing is to be implemented within POLARIS, an open source agent based
% framework for simulating large scale transportation systems developed by
% Argonne National Laboratory, wherein it is desired to keep a simulation day as
% short as possible to allow for tractable simulation runs for transportation and
% policy researchers. Hence, an algorithm proposed under such strict restrictions
% is bound to perform efficiently in real-world scenarios.

\section{Literature Review}\label[sec]{litrev}

DRS is a well-studied subject in the transportation literature, and
several review articles have summarized existing
works~\citep{furuhata2013ridesharing,martins2021optimizing,tafreshian2020frontiers,tahmasseby2014dynamic}.
DRS is expected to reduce traffic congestion by increasing vehicle occupancy. \citet{alisoltani2021can} used
a simulation to assess the validity of this perception and found that the DRS can be  impactful in 
large-scale transportation networks, whereas its contribution is slight in small- and medium-scale networks. 
\citet{alonso2017demand} demonstrated a multistep approach to tackle the DRS problem where they first construct a pairwise shareability graph, apply pruning to create a candidate trip's graph, and optimize over the graph using an IP.
Their solution methodology is shown to efficiently match over 800,000 riders with 3,000 vehicles, and they report an average of 2,000 riders in each of their IPs solved.
A major difference in our study is that the \texttt{IA} is capable of assigning more than one rider to a vehicle at each iteration, whereas their model assigns only one rider to each vehicle.

\citet{agatz2012optimization} listed three common objectives in ridesharing
optimization models: (i) minimize total vehicle miles traveled, (ii)
minimize total vehicle times traveled, and (iii) maximize the number of
vehicle-to-rider matches. In this study we minimize the cost incurred by the service provider and customers for serving all incoming requests. The service provider costs account for the marginal travel cost increase (due to detour) for serving a given request. The experienced customer cost is the excess time traveled to reach its destination compared with a direct trip from its origin to its destination. 
\citet{furuhata2013ridesharing} divided
ridesharing patterns into four classes based on spatiotemporal information of riders:
 identical,  inclusive,  partial, and  detour. Here, identical
refers to the same origin and destination (OD) pairs as well as time schedules for
two riders; inclusive means one rider's OD pair is spanned by another rider's
OD pair while time schedules allow such sequential pick-up/drop-off ordering;
partial refers to the case in which one rider's either origin or destination is
different from another but time schedules allow shared service; detour
represents the scenario in which OD pairs for riders are distinct but time
schedules allow shared service. The solution methodologies proposed in this
study consider all of these patterns.

\textit{Path flexibility} allows amending the current vehicle route to pick up a rider. \textit{Multi-hop} refers to transferring a rider from one vehicle to another to gain improvement in the objective. \citet{masoud2017real} categorized the ridesharing studies in the literature based on the flexibility of paths, multi-hop routes, the ability to handle multiple riders, the solution algorithm, and the optimality of the solution. We account for path flexibility and both individual and grouped riders, for example, a group of friends with common OD pairs. We do not account for the multi-hop and find it  an approach that may potentially make riders uncomfortable. We propose both an optimization model finding an optimal solution in a given batching interval and a greedy heuristic to solve the problem. However, these methodologies provide proxies since large-scale problems are considered, and finding the globally optimal solution for an entire day in such problems is difficult if not impossible.

\citet{masoud2017real} stated that many studies considered the rolling horizon
approach in which the TNC operator solves a static ridesharing problem at
discrete time intervals as requests are batched. \citet{agatz2012optimization} mentioned that solving these problems can
become
challenging in large-scale transportation networks as the computational
complexity quickly increases. To address this situation, researchers have used
decomposition, partitioning, and clustering
methods~\citep{agatz2012optimization,masoud2017decomposition,najmi2017novel,nourinejad2016agent,pelzer2015partition,shen2016dynamic}. 
It is known that such partitioning methods may eliminate potential better
assignments, and minimizing the optimality loss requires solving an NP-hard min-cut
problem~\citep{aissi2008complexity,johnson1993min}. In this study we limit
the number of vehicles queried for each request to control the problem complexity. We note that
such an approach may also end up missing better vehicle-to-rider assignments.
However, this limit can be easily tuned to balance the trade-off between the
computational time and the solution quality. 

\section{Methodology}\label[sec]{method}
In this
section we provide notation, state the problem formally, present an assignment
problem with its relaxation, propose the  \texttt{IA} algorithm, address
the possible infeasibility of the LP, shed light on the operator- and customer-centric
objectives, and propose a greedy real-time assignment approach.

\subsection{Notation and formal problem statement}
\label[sec]{notation}

Given a set of available vehicles
$\cV$, let $\cR$ be the set of customer requests that arrive between the start
and end of our planning window. The duration of this batching window is denoted by $\Tbatch$.  
A request denotes one or more riders scheduling a trip between two locations.
Let $Q_r$ be the number of people to be served in request $r$,
and let $\overline{Q}_v$ be the number of available seats in vehicle $v$. 
Each request $r$ has an expected travel time without ridesharing, $T_r$.
Compared with an immediate assignment approach in which a request is assigned to
a vehicle as soon as one is available, batching requests allows for matchings
that are potentially better (for both  the passengers' wait times and drivers'
travel times).

In a large-scale dynamic ridesharing problem, however, considering all available
vehicles as candidates for serving every request $r \in \cR$ can be
computationally expensive. Thus, we denote $\cV_r$ as a subset of vehicles that
can be matched with a given $r\in\cR$. While  many possible approaches
for forming $\cV_r$ exist, we let $\cV_r$ contain the $N$ vehicles closest to the
origin location of $r$ that have space available to accommodate $r$. This set can be efficiently queried from the
transportation network by using a $k$-$d$-tree-based nearest-neighbor search.

Let $T_{rv}$ be the
travel cost of assigning request $r$ to vehicle $v$.
If $v$ is idle, $T_{rv}$ is the total duration
$v$ has to travel to pick up and drop off $r$. Otherwise, $T_{rv}$ is
the additional duration $v$ has to travel to include $r$ in its route plan. The
route plan of a vehicle provides the order in which pick-ups and drop-offs are
executed by the vehicle for the requests being served by it.  Let $\Theta^0_v$
be the existing route plan of vehicle $v$, wherein $v$ could be already
catering to one or more requests, and let $\Theta^r_v$ be the potential
route plan where $v$ accommodates $r$. Then $T_{rv}$ is computed as the excess
travel time on route plan $\Theta^r_v$ compared with $\Theta^0_v$. Ideally, the
route plans are to be determined by computing constrained shortest paths, an
expensive affair that we circumvent by heuristically predetermining the pick-up
and drop-off order based on Euclidean or Manhattan distance. In other words,  starting from
the current vehicle location, the next pick-up or drop-off location is chosen
based on its closeness to the vehicle's previous location, all the while making
checks that the drop-off for a request can happen only if its corresponding pick-up
has already occurred. 

We let $D^P$ denote the maximum allowed pick-up delay for all requests.
If a request is not satisfied within $D^P$ time units of when it was made,
then the request is left to find an alternative transportation mode (e.g., bus,
rail, or an individually owned car). Let $D^T_r$ be the maximum allowed travel
delay for request
$r$; this will be set by 
\begin{equation}
    D^T_r := \max \{\overline{D}, \ T_r \times D^{\%}\},
\end{equation}
where $\overline{D}$ is a fixed minimum value for the travel delay threshold, say 10
minutes, and $D^{\%}$ is a threshold as a percentage of the expected travel
time. For example, if $D^{\%}$ is $0.33$, a maximum of $\approx 20$ minutes
travel delay is allowed for a request with an expected travel time of $1$ hour. \cref{lp} details the sets, parameters, and decision variables to be used
throughout. 

\begin{table}[!ht]
  \footnotesize
  \caption{Sets, parameters, and variables describing the ridesharing problem}
  \label[tab]{lp}
  \begin{tabularx}{\textwidth}{lX}
    \toprule
    Set & Definition\\
    \midrule
			$\cR$ & Set of requests in the batching period.\\
			$\cV_r$ & Subset of vehicles that can potentially service request $r$.\\
			$\cV$ &  Set of vehicles that can be assigned in the batching period, $\cV = \cup_{r \in \cR} \cV_r$.\\
			$\cE$ & Set of edges $(r, v)$ such that $r \in \cR, v \in \cV_r$.\\
    \midrule\\[-.35cm]
    Parameter & Definition\\
    \midrule
            $\Tbatch$ & Batching period duration.\\
            $T_r$ & Expected travel time without ridesharing for request $r$, $\forall r \in \cR$.\\
			$N$ & Number of potential vehicles to consider in $\cV^r, \forall r \in \cR$.\\
			$D^P$ & Maximum allowed pick-up delay for any request.\\
			$D^T_r$ & Maximum allowed travel delay for request $r$, $\forall r \in \cR$.\\
            $Q_r$ & Number of customers requesting a ride in request $r$, $\forall r \in \cR$.\\
            $\overline{Q}_v$ & Seats available in vehicle $v$, $\forall v \in \cV$.\\
            $\Theta^0_v$ & The route plan followed by vehicle $v$ before making assignments in the current batch, $\forall v \in \cV$.\\
            $\Theta^r_v$ & The potential route plan of vehicle $v$ if $v$ has to accommodate request $r$, $\forall (r, v) \in \cE$. \\
            $T_{rv}$ & Additional duration vehicle $v$ would have to travel 
            to serve request $r$, that is, the difference in travel time on route plan $\Theta^r_v$ and $\Theta^0_v$, $\forall (r, v) \in \cE$. \\
% 			$W_{rv}$ & Maximum of the potential delay of all customers being serviced by vehicle $v$ including $r$, $\forall (r, v) \in \cE$ (see \cref{customer} for details).\\
			% $\Lambda$ & Scaling parameter for fleet operator vs. customer focused optimization (see \cref{customer} for details).\\
	\midrule\\[-.35cm]
	Variable & Definition\\
    \midrule
            % $x_{rv}$ & Continuous counterpart of the binary variable of the integer program indicating if request $r$ is assigned to vehicle $v$, $\forall r \in \cR$, $\forall v \in \cV_r$\\
            $x_{rv}$ & 1 if request $r$ is assigned to vehicle $v$, $\forall (r, v) \in \cE$, 0 otherwise\\
    \bottomrule
  \end{tabularx}
\end{table}

\subsection{Assignment problem and relaxation}
\label[sec]{assign}
Matching requests to vehicles can be viewed as an assignment problem
constrained by vehicle capacities.
Two setbacks can arise when modeling the ridesharing problem in a large-scale
transportation network as an assignment problem: (i) solving a
large integer program at the end of every batching period can become a
computational bottleneck, and (ii) the cost of assigning a request to a vehicle
depends on the set of requests assigned to this vehicle in the same batching
period. This cost dependence does not appear in traditional assignment problems.
We propose the \texttt{IA} process, motivated by the iterative
approximation algorithm for the generalized assignment problem in
\citet{lau2011iterative}, to address both of these concerns. 

Assignment problems can be solved by forming a bipartite graph. In our case we let 
$G = (\cR \cup \cV, \cE)$, where $\cV = \cup_{r \in \cR} \cV_r$ is the set of vehicles
considered in the batch and $\cE = \{(r, v): r \in \cR, v \in \cV_r \}$ is the
set of edges in $G$. Binary variables
$x_{rv} \in \{0,1\}, \forall (r, v) \in \cE$ will be used to denote whether
request $r$ is assigned to vehicle $v$ in the integer program. We let LP($G$) be the linear relaxation (allowing for fractional values for
$x_{rv}$) of this integer linear assignment program on $G$, namely,

\begin{numcases}{\text{LP}(G) \coloneqq}
  \eqmathbox[lhs][l]{\minimize_{x_{rv}}} \sum_{(r, v) \in \cE} T_{rv}  x_{rv} \label{eq:LPG_obj}\\
  \eqmathbox[lhs][l]{\text{subject to}} \sum_{v : (r, v) \in \cE} x_{rv} = 1,\ \forall r \in \cR            \label{eq:LPG_cons1}\\
  \eqmathbox[lhs][l]{ } \sum_{r: (r, v) \in \cE} Q_r  x_{rv} \leq \overline{Q}_v,\ \forall v \in \cV \label{eq:LPG_cons2}\\
  \eqmathbox[lhs][l]{ } x_{rv} \geq 0,\ \forall (r, v) \in \cE, \label{eq:LPG_non_neg}
\end{numcases}

% \begin{equation}\label{eq:LPG}
%   \text{LP}(G)
%   \coloneqq
%   \left\{ \begin{aligned}
%   \minimize_{x_{rv}} \ & \sum_{(r, v) \in \cE} T_{rv}  x_{rv}\\
%     \text{subject to:}\ & \sum_{v : (r, v) \in \cE} x_{rv} = 1,\ \forall r \in \cR  \\
%      & \sum_{r: (r, v) \in \cE} Q_r  x_{rv} \leq \overline{Q}_v,\ \forall v \in \cV \\
%         & x_{rv} \geq 0,\ \forall (r, v) \in \cE.
%   \end{aligned} \right. 
% \end{equation}

\noindent where the objective function \eqref{eq:LPG_obj} minimizes the total additional duration vehicles need to travel to serve their corresponding requests; constraints \eqref{eq:LPG_cons1}, which we will denote as $\cC_{\cR}$, impose that every request should be matched to only one vehicle (if $x_{rv}\in \{0,1\}, \forall (r, v) \in \cE$); constraints \eqref{eq:LPG_cons2}, which we will denote as $\cC_{\cV}$, ensure that vehicle capacities are not exceeded; constraints \eqref{eq:LPG_non_neg} are the non-negativity constraints.

\subsection{Iterative assignment algorithm}
\label[sec]{algo}
We can solve one or more linear programs at the end of every
batching period instead of one large-scale integer program, provided the size
of the linear programs is reduced over successive iterations. \Cref{alg:IA}
presents the \texttt{IA} algorithm that
attempts to do the same. Given an initial set of requests $\cR^0$ and vehicles $\cV^0$ resulting in the bipartite graph $G^0$, the \texttt{IA} proceeds by removing infeasible variables
(edges) in $G^0$; that is,  variable $x_{rv}$ is removed if the number of people
requesting a ride in request $r$ exceeds the number of seats available in
vehicle $v$ or if serving $r$ increases the pick-up or travel delay of any of
the customers in $v$ beyond a threshold as detailed in \cref{notation}.  After finishing the initial pruning step, if there are no feasible vehicles for a
request $r$, that is, $\deg(r) = 0$ in $G^0$, then $r$
cannot be assigned in the current batch and is removed from $\cR^0$. We will use LP($G$) to refer to the linear programs being solved within
iterations of the \texttt{IA}, as LP($G^0$) is used to refer to the original linear
program, and $G$ is a subset of $G^0$ in every iteration. For ease of
presentation, we assume in this subsection that each LP($G$) encountered is
feasible. In \cref{infeasibility}, we demonstrate what to do when an infeasible
instance of LP$(G)$ is encountered. Assuming the linear program is feasible, the \texttt{IA} solves LP($G$) and then makes use of the optimal solution thus obtained to intelligently assign a subset of requests before moving on to the next iteration to address the requests that remain. 

We will now explore how to make the assignments after solving
LP($G$) in every iteration of the \texttt{IA}. We could make a request $r$
to vehicle $v$ assignment if variable $x_{rv}$ in the optimal solution takes value $1$;
but if we end up assigning multiple requests to a single vehicle within the
same iteration, the cost of assignment will be incorrect, which was our second
concern alluded to in \cref{assign}. For example,   if
two requests $r^1$ and $r^2$ are assigned to vehicle $v$, the cost of
assignment is not equal to $T_{r^1v} + T_{r^2v}$. Hence,
instead of making all $r-v$ assignments for which $x_{rv}$ takes value $1$ in
the optimal solution, for those vehicles that potentially get multiple
requests assigned, we only assign the request with the least cost of assignment (breaking ties arbitrarily); 
the remaining requests are pushed to the next iteration. For vehicle $v$ that was assigned a request $r$, we update the route plan of $v$ to reflect the assignment, that is, $\Theta^0_v \leftarrow \Theta^r_v$. Before the \texttt{IA}
proceeds to the next iteration, the assignment costs for all the vehicles that
were assigned a request in the current iteration are recomputed. For example,   if
requests $r^1$ and $r^2$ were feasible for vehicle $v$ and if $r^1$ was
assigned to $v$ and $r^2$ remains unassigned in the current iteration, then
$T_{r^2v}$ has to be recomputed if $v$ is still feasible for $r^2$, after updating the existing route plan of $v$ to reflect the $r^1-v$ assignment, that is, $\Theta^0_v \leftarrow \Theta^{r^1}_v$. Doing so maintains the validity of the cost of all assignments within all iterations of the \texttt{IA}. The edges $(r, v) \in \cE$ that become infeasible at the end of an iteration are removed from $G$; and if there are no feasible vehicles for a request $r$, that i, $\deg(r) = 0$ in $G$, then $r$ cannot be assigned a vehicle in the current batch and is removed from $\cR$. The requests that were initially present in $\cR^0$ and are not assigned a vehicle in the final matching $\cM$ that is returned when \Cref{alg:IA} terminates can return in the next batch if their pick-up-delay threshold permits; otherwise, they are left to find alternative transportation modes.

To show that the \texttt{IA} terminates within a deterministic maximum number of
iterations, we make use of \cref{lemma_one}. If the
optimal extreme point solution of LP($G$) contains no variable
$x_{rv}$ that takes value 1 in an iteration, \cref{lemma_one} guarantees that if
LP($G$) is feasible, there exists at least one variable that takes a
value of $0$. In such an iteration of the \texttt{IA}, we remove all edges $(r, v) \in
\cE$ for which $x^*_{rv} = 0$ and then proceed to the next iteration. We have
now ensured that with every iteration the problem instance reduces by at least
one request (node) or variable (edge), thereby asserting that the \texttt{IA}
terminates in a finite number of iterations on the order of size  $G^0$.

\begin{algorithm2e}[!ht]
  \caption{Iterative Assignment (\texttt{IA}) \label{alg:IA}}
  \fontsize{9}{9}\selectfont
	\DontPrintSemicolon 
	\SetAlgoNlRelativeSize{-5}
	\SetKw{true}{true}
	\SetKw{break}{break}

    Inputs: $\cR^0, \cV^0, \cE^0$
    
     Remove infeasible edges from $\cE^0$ \tcp*[f]{Based on capacity and delay thresholds (see \cref{algo} for details)}
    
  $\cM \leftarrow \emptyset;\ \cR \leftarrow \cR^0;\ \cV \leftarrow \cV^0;\ \cE \leftarrow \cE^0$\\
  \While{$\cE \not = \emptyset$}
  {
     $\cV^a \leftarrow \emptyset$ \tcp*[f]{Set of vehicles that are assigned a request in the current iteration}

    \eIf{\textup{LP$(G)$ is infeasible}}
    {
    
        $\cR, \cV, \cE, G_U, \cM \gets $ \Cref{alg:infeasiblity}$(\cR, \cV, \cE, \cM)$

        Solve $\overline{\textrm{LP}}(G_U)$ to obtain an optimal extreme point solution $\bx^*$
    }
    {
    Solve LP$(G)$ to obtain an optimal extreme point solution $\bx^*$
    }
    
    \uIf{$\exists (r, v) \in \cE: x^*_{rv} = 1$} 
    {
        \For{$v^0 \in \cV: \exists r \in \cR \textnormal{ with } x^*_{rv^0} = 1$}
        {
                $r^0 \leftarrow \argmin_{r \in \cR} \{T_{rv^0}: x^*_{rv^0} = 1 \}$
                
                $\cM \leftarrow \cM \cup (r^{0}, v^0);\ \cV^a \leftarrow \cV^a \cup \{v^0\};\ \cR \leftarrow \cR \setminus \{r^{0}\};\ \overline{Q}_{v^0} \leftarrow \overline{Q}_{v^0} - Q_{r^{0}};\ \Theta^0_{v^0} \leftarrow \Theta^{r^0}_{v^0}$
                
                $\cE \leftarrow \cE \setminus \{(r^0, v): v \in \cV, (r^0, v) \in \cE \}$
        }
    }
    \ElseIf{$\exists (r, v) \in \cE: x^*_{rv} = 0$}
    {
        $\cE \leftarrow \cE \setminus \{(r, v): x^*_{rv} = 0 \}$
    }
    
    \For{$v \in \cV^a$}
    {
        \For{$r \in \cR: (r, v) \in \cE$}
        {
            \If{\textup{edge $(r, v)$ is infeasible}}
            {
                $\cE \leftarrow \cE \setminus \{(r, v) \}$
            }
            \Else
            {
                Recompute $T_{rv}$
            }
        }
    }
    
    $\cR \leftarrow \{r\in \cR: \deg(r) \neq 0\}$
    
    $\cV \leftarrow \{v \in \cV: \deg(v) \neq 0\}$
    
 }
     \Return $\cM$
\end{algorithm2e}

\begin{lemma}
  \label[lem]{lemma_one}
  Given $Q_{r} \leq \overline{Q}_{v},\ \forall (r, v) \in \cE$, let $\bx$ be an extreme point solution of \textup{LP($G$)}. Then
  $\exists (r, v) \in \cE: x_{rv} \in \{0, 1\}$.
\end{lemma}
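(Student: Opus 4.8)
The plan is to argue by contradiction using the standard rank/counting argument for vertices of assignment-type polytopes (as in the generalized-assignment analysis of \citet{lau2011iterative}). Suppose no variable is integral, that is, $0 < x_{rv} < 1$ for every $(r,v) \in \cE$, and I will show this is incompatible with $\bx$ being an extreme point of LP($G$).

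First I would record the two ingredients of an extreme point. Since $\bx$ is a vertex of LP($G$) in $\mathbb{R}^{|\cE|}$, there must be $|\cE|$ linearly independent constraints of LP($G$) that are tight at $\bx$. Because every $x_{rv} > 0$, none of the non-negativity constraints \eqref{eq:LPG_non_neg} is tight, so all tight constraints come from the request equalities $\cC_{\cR}$ in \eqref{eq:LPG_cons1} and from the subset $\cV' \subseteq \cV$ of vehicles whose capacity constraint \eqref{eq:LPG_cons2} holds with equality. Counting the available tight constraints gives the rank inequality $|\cE| \le |\cR| + |\cV'|$.

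Next I would extract degree lower bounds from strict fractionality together with the hypothesis $Q_r \le \overline{Q}_v$. Each request $r$ has $\deg(r) \ge 2$ in the support graph, since a single incident edge would force $x_{rv} = 1$ through \eqref{eq:LPG_cons1}. Each tight vehicle $v \in \cV'$ likewise has $\deg(v) \ge 2$: if its only incident edge were $(r,v)$, tightness of \eqref{eq:LPG_cons2} would give $Q_r x_{rv} = \overline{Q}_v$, whence $x_{rv} = \overline{Q}_v / Q_r \ge 1$, contradicting $x_{rv} < 1$. Summing degrees over requests yields $|\cE| = \sum_{r \in \cR} \deg(r) \ge 2|\cR|$, and summing over tight vehicles yields $\sum_{v \in \cV'} \deg(v) \ge 2|\cV'|$, with the latter sum bounded above by $|\cE|$. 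Feeding $|\cV'| \ge |\cE| - |\cR|$ from the rank inequality into these bounds forces $|\cE| \le 2|\cR|$, hence every inequality becomes an equality: every request and every tight vehicle has degree exactly $2$, every support edge is incident to a tight vehicle, and $|\cE| = |\cR| + |\cV'|$ with $|\cV'| = |\cR|$. Thus the support of $\bx$ is a $2$-regular bipartite graph, that is, a disjoint union of even cycles.

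The crux is then to contradict linear independence on this cycle structure. For a single even cycle $r_1 - v_1 - r_2 - v_2 - \cdots - r_k - v_k - r_1$, I would exhibit an explicit linear dependence among its $2k$ vertex constraints: weight each request equality by $-Q_r$ and each vehicle equality by $+1$. On the edge $(r_t, v_t)$ the combination contributes $-Q_{r_t} + Q_{r_t} = 0$, and on the edge $(v_t, r_{t+1})$ it contributes $-Q_{r_{t+1}} + Q_{r_{t+1}} = 0$; since each edge lies on exactly one request equality and one vehicle equality, the weighted combination of the cycle's tight constraints vanishes identically. Hence the tight constraints of each even cycle are linearly dependent, the full tight system has rank strictly below $|\cE|$, and $\bx$ cannot be an extreme point---a contradiction, so some $x_{rv} \in \{0,1\}$. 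I expect the main obstacle to be locating this exact cancellation, namely threading the capacities $Q_r$ through the alternating cycle so that the weighted vehicle and request rows cancel on every edge; once the weights are found the dependence is immediate, and the preceding degree-counting is routine.
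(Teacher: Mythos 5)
Your proof is correct and follows essentially the same route as the paper's: a contradiction via counting tight constraints at an extreme point, where the hypothesis $Q_r \leq \overline{Q}_v$ is used in exactly the same way (to rule out a tight capacity constraint at a degree-one vehicle), and the linear dependence you exhibit with weights $Q_r$ on request rows and $1$ on vehicle rows is precisely the paper's identity $\sum_{r \in \cR} Q_r \mathbf{a}_r = \sum_{v \in \cV} \mathbf{a}_v$. Your detour through the $2$-regular even-cycle structure of the support is a valid refinement but not needed: the paper's counting (its $\cV^{\alpha}$/$\cV^{\beta}$ partition forcing $\alpha = 0$ and all of $\cC_{\cR} \cup \cC_{\cV}$ active) lets it apply that dependence once globally, which is your per-cycle cancellation summed over components.
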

\begin{proof}
Let us prove by contradiction. Assume there exists some LP($G$) with
extreme point solution $\bx$ such that $0 < x_{rv} < 1, \forall (r, v) \in
\cE$. The constraint set $\cC_{\cR}$ tells us that $\deg(r) \geq 2,\
\forall r \in \cR$ since an all-fractional solution is infeasible for LP($G$) if there exists a request $r$ with $\deg(r) = 1$. We have that
\begin{align}
    |\cE| = \frac{1}{2} \big (\sum_{r \in \cR} \deg(r) + \sum_{v \in \cV} \deg(v) \big) \geq |\cR| + \frac{1}{2} \sum_{v \in \cV} \deg(v) . \label[cons]{eq:lemma1-eq1}
\end{align}

\noindent Since $\bx$ is an extreme point solution, we know that there are at least $|\cE|$ linearly independent constraints that are active at $\bx$, that is, satisfied as equality. Let us now partition $\cV$ into two groups $\cV^{\alpha}$ and $\cV^{\beta}$ such that $\deg(v) = 1, \forall v \in \cV^{\alpha}$ and $\deg(v) \geq 2, \forall v. \in \cV^{\beta}$. Also, let $|\cV^{\alpha}| = \alpha$ and $|\cV^{\beta}| = \beta$.  Say, for some $\hat{v} \in \cV^{\alpha}$, that $\hat{r}$ is the only connected request, that is, $(\hat{r}, \hat{v}) \in \cE$. We know that the constraint $Q_{\hat{r}} x_{\hat{r}\hat{v}} \leq \overline{Q}_{\hat{v}}$ cannot be active since its given $Q_{\hat{r}} \leq \overline{Q}_{\hat{v}}$, and by assumption $0 < x_{\hat{r}\hat{v}} < 1$. Hence, $\forall v \in \cV^{\alpha}$, the corresponding constraint in $\cC_{\cV}$ cannot be active.  In addition to these constraints in $\cC_{\cV}$, the non-negativity constraints in \eqref{eq:LPG_non_neg} are also not active, which gives us the inequality
\begin{align}
    |\cR| + |\cV| - \alpha &\geq |\cE|, \label[cons]{eq:lemma1-eq2}
\end{align}

\noindent because only a maximum of $ |\cR| + |\cV| - \alpha$ constraints can be active at $\bx$. Now, from \labelcref{eq:lemma1-eq1} and \labelcref{eq:lemma1-eq2} we have
\begin{align}
    |\cR| + |\cV| - \alpha &\geq |\cR| + \frac{1}{2} \sum_{v \in \cV} \deg(v) \\
    \implies |\cV| - \alpha &\geq \frac{1}{2}\sum_{v \in \cV^{\alpha}}\deg(v) + \frac{1}{2}\sum_{v \in \cV^{\beta}}\deg(v) \\
   &= \frac{\alpha}{2} + \frac{1}{2} \sum_{v \in \cV^{\beta}} \deg(v) \\
   \implies \beta &\geq \frac{\alpha}{2} + \frac{1}{2} \sum_{v \in \cV^{\beta}} \deg(v) 
   \geq \frac{\alpha}{2} + \beta. \label[cons]{eq:lemma1-eq3}
\end{align}

\noindent From \labelcref{eq:lemma1-eq3} we see that $\alpha = 0$, which together with \labelcref{eq:lemma1-eq1} implies
\begin{align}
    |\cE| &\geq |\cR| + |\cV|.
\end{align}
\noindent In other words, all the constraints in $\cC_{\cR}$ and $\cC_{\cV}$ must be active. In order to complete the proof, it suffices to show that the constraints in $\cC_{\cR}$ and $\cC_{\cV}$ together are  linearly dependent . Let $\mathbf{A}$ be the constraint matrix of LP($G$), where $\mathbf{a}_{r}$ is the row of $\mathbf{A}$ corresponding to request $r$ in $\cC_{\cR}$ and $\mathbf{a}_{v}$ is the row corresponding to vehicle $v$ in $\cC_{\cV}$. We have that $\sum_{r \in \cR} Q_{r} \mathbf{a}_{r} = \sum_{v \in \cV} \mathbf{a}_{v}$; in other words, the maximum number of linearly independent constraints  that can be active at $\bx$ is strictly lower than $|\cR| + |\cV|$, thereby arriving at the required contradiction.
\end{proof}

\subsection{Addressing \textup{LP($G$)} infeasibility}\label[sec]{infeasibility}
It can happen that LP($G$) is infeasible in an iteration of the \texttt{IA}; that is,  given the set of requests $\cR$ and the set of vehicles $\cV$, there does not exist an assignment that assigns all the requests in $\cR$. One can easily see this happening when, for example, two requests $r^1$ and $r^2$ with $Q_{r^1} = Q_{r^2} = 1$ and $\deg(r^1) = \deg(r^2) = 1$ have $v^0$ as the only feasible vehicle for both $r^1$ and $r^2$ with $\overline{Q}_{v^0} = 1$.  

\begin{algorithm2e}[!ht]
  \caption{$G$ Reduction \label{alg:infeasiblity}}
  \fontsize{9}{9}\selectfont
	\DontPrintSemicolon 
	\SetAlgoNlRelativeSize{-5}
	\SetKw{true}{true}
	\SetKw{break}{break}

    Inputs: $\cR, \cV, \cE, \cM$\\
    
    $\cR^{\geq 2} \leftarrow \{r \in \cR: Q_r \geq 2 \}$
    
    Order $\cR^{\geq 2}$ in the decreasing order of $Q_r$
    
    \For{$r^0 \in \cR^{\geq 2}$}
    {
        \If{$\exists v \in \cV: (r^0, v) \in \cE$}
        {
            $v^0 \leftarrow \argmin_{v \in \cV} \{T_{r^0v}: (r^0, v) \in \cE \}$
            
            $\cM \leftarrow \cM \cup \{(r^0, v^0) \};\ \cR \leftarrow \cR \setminus \{r^0\};\ \overline{Q}_{v^0} \leftarrow \overline{Q}_{v^0} - Q_{r^0};\ \Theta^0_{v^0} \leftarrow \Theta^{r^0}_{v^0}$
            
            $\cE \leftarrow \cE \setminus \{(r^0, v): v \in \cV, (r^0, v) \in \cE \}$
            
            \For{$r \in \cR: (r, v^0) \in \cE$}
            {
                \If{\textup{edge $(r, v^0)$ is infeasible}}
                {
                    $\cE \leftarrow \cE \setminus \{(r, v^0) \}$
                }
                \Else
                {
                    Recompute $T_{rv^0}$
                }
            }

        }
    }
    
    $\cR \leftarrow \{r\in \cR: \deg(r) \neq 0\}$; $\cV \leftarrow \{v \in \cV: \deg(v) \neq 0\}$
    
    $G_U \leftarrow (\cR \cup \cV, \cE)$
    
    \Return  $\cR, \cV, \cE, G_U, \cM$
\end{algorithm2e}

When LP($G$) is infeasible, instead of declaring infeasibility and leaving all
the requests in $\cR$ unassigned or making an entirely greedy assignment in
the iteration, we can conduct an informed assignment by making use of the
structure of the polytope defined by the constraints of LP($G$). Consider
greedily assigning, in the decreasing order of $Q_r$, requests with $Q_r \geq
2$. For a request $r^0 \in \cR: Q_{r^0} \geq 2$, we assign vehicle $v^0$ such
that $v^0 = \argmin_{v \in \cV} \{T_{r^0v}: (r^0, v) \in \cE\}$, and we add
the assignment to the matching $\cM$ that is returned when \Cref{alg:IA}
terminates. Next we update $v^0$'s capacity and route plan, namely, $\Theta^0_{v^0} \leftarrow \Theta^{r^0}_{v^0}$, after which $\forall r \in \cR: (r,
v^0) \in \cE$, edge $(r, v^0)$ is removed from $\cE$ if it is infeasible as
per the capacity and delay thresholds; otherwise the cost of assignment
$T_{rv^0}$ is recomputed. If any request with $Q_r \geq 2$ is left unassigned
with no feasible vehicles, that request is removed from $\cR$ since
it cannot be assigned in the current batch. After this process has completed, $G$
that remains contains only those requests with $Q_r = 1$. This
makes LP($G$) the well-studied linear assignment problem on an
integral polytope: the constraint matrix is totally unimodular, and
the right-hand-side constraint vector is integral. We will refer to such a graph
$G$ as $G_{U}$, and we have that LP($G_{U}$), if
feasible, returns an integral optimal extreme point solution. \Cref{alg:infeasiblity} formally states the procedure to reduce $G$ to $G_U$. At this stage we do not know whether LP($G_{U}$) is feasible or not; and as we will see, we do not require that piece of information. Consider the following linear program
$\overline{\textup{LP}}$\textup{($G_{U}$)}  defined on
$G_{U}$:
% \begin{equation}\label{eq:LPGU}
%   \overline{\text{LP}}(G_U)
%   \coloneqq
%   \left\{ \begin{aligned}
%   \maximize_{x_{rv}} \ & \sum_{(r, v) \in \cE} (M - T_{rv}) x_{rv}  \\
%     \text{subject to:}\ & \sum_{v : (r, v) \in \cE} x_{rv} \leq 1,\ \forall r \in \cR \\
%     & \sum_{r: (r, v) \in \cE}  x_{rv} \leq \overline{Q}_v,\ \forall v \in \cV \\
%     & x_{rv} \geq 0,\ \forall (r, v) \in \cE.
%   \end{aligned} \right. 
% \end{equation}
\begin{numcases}{  \overline{\text{LP}}(G_U)\coloneqq}
  \eqmathbox[lhs][l]{\maximize_{x_{rv}}} \sum_{(r, v) \in \cE} (M - T_{rv}) x_{rv} \label{eq:LPGU_obj}\\
  \eqmathbox[lhs][l]{\text{subject to}} \sum_{v : (r, v) \in \cE} x_{rv} \leq 1,\ \forall r \in \cR            \label{eq:LPGU_cons1}\\
  \eqmathbox[lhs][l]{ } \sum_{r: (r, v) \in \cE} x_{rv} \leq \overline{Q}_v,\ \forall v \in \cV \label{eq:LPGU_cons2}\\
  \eqmathbox[lhs][l]{ } x_{rv} \geq 0,\ \forall (r, v) \in \cE, \label{eq:LPGU_non_neg}
\end{numcases}
where $M$ in \eqref{eq:LPGU_obj} is a large enough positive value such that $(M - T_{rv}) > 0$,
$\forall (r, v) \in \cE$. We have that
$\overline{\textup{LP}}$\textup{($G_{U}$)} is a maximization problem
with all positive costs, with the constraints $\cC_{\cR}$ 
in \eqref{eq:LPGU_cons1} being inequalities compared with equalities in
\eqref{eq:LPG_cons1}. Since $x_{rv} = 0$, $\forall (r, v) \in
\cE$ is a feasible solution to
$\overline{\textup{LP}}$\textup{($G_{U}$)}, we have that
$\overline{\textup{LP}}$\textup{($G_{U}$)} is always feasible. Even
when LP($G_{U}$) is infeasible,
$\overline{\textup{LP}}$\textup{($G_{U}$)} will make an informed
potential assignment as the LP solution, which is then handled as usual within the
\texttt{IA}. For cases where LP($G_{U}$) is feasible, Lemma
\ref{lemma:two} tells us that with a carefully selected value for $M$, the
optimal solution returned by
$\overline{\textup{LP}}$\textup{($G_{U}$)} is optimal for
LP($G_{U}$) as well. Hence, in an iteration of the \texttt{IA} where it
has been identified that LP($G$) is infeasible, we first reduce
$G$ to $G_{U}$ by greedily assigning requests with $Q_r \geq
2$, and then we proceed to solve
$\overline{\textup{LP}}$\textup{($G_{U}$)}. 

\begin{lemma}
  \label{lemma:two}
  If \textup{LP($G_{U}$)} is feasible, $\exists \ M < \infty$ such that
  $\overline{\bx}^*$ optimal for
  $\overline{\textup{LP}}$\textup{($G_{U}$)} is also optimal for
  \textup{LP($G_{U}$)}.
\end{lemma}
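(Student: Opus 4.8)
The plan is to exploit the fact that $\overline{\textup{LP}}(G_U)$ relaxes the equality constraints $\cC_{\cR}$ of LP($G_U$) to inequalities, so that the feasible region of LP($G_U$) is contained in that of $\overline{\textup{LP}}(G_U)$, and then to choose $M$ large enough that the maximization is forced back onto that smaller region. First I would record the identity that on any feasible point of LP($G_U$) one has $\sum_{v:(r,v)\in\cE} x_{rv}=1$ for every $r\in\cR$, so the total flow $S(\bx):=\sum_{(r,v)\in\cE} x_{rv}$ equals $|\cR|$, whereas on $\overline{\textup{LP}}(G_U)$ the relaxed constraints \eqref{eq:LPGU_cons1} give only $S(\bx)\le|\cR|$. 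Writing the objective of $\overline{\textup{LP}}(G_U)$ as $M\,S(\bx)-\sum_{(r,v)\in\cE} T_{rv}x_{rv}$, I observe that restricted to the LP($G_U$) feasible region it equals the constant $M|\cR|$ minus the LP($G_U$) objective; hence maximizing $\overline{\textup{LP}}(G_U)$ over that region is the same as minimizing LP($G_U$). The lemma thus reduces to showing that, for a suitable $M$, some optimal solution of $\overline{\textup{LP}}(G_U)$ actually lies in the LP($G_U$) feasible region, i.e.\ attains $S=|\cR|$.

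To pin $M$ down I would invoke the integrality already established for $G_U$: the constraint matrix is totally unimodular and the right-hand side is integral, so every extreme point of $\overline{\textup{LP}}(G_U)$ is integral, and in particular $S$ takes an integer value there. Since LP($G_U$) is assumed feasible, there is a full-assignment point $\bx'$ with $S(\bx')=|\cR|$, which is also feasible for $\overline{\textup{LP}}(G_U)$. Taking an optimal extreme point $\overline{\bx}^*$ and supposing for contradiction that $S(\overline{\bx}^*)\le|\cR|-1$, I would use the bound $0\le \sum_{(r,v)\in\cE} T_{rv}x_{rv}\le \sum_{(r,v)\in\cE} T_{rv}$ (call this sum $T_{\max}$; the $T_{rv}$ are nonnegative and finite) to estimate the objective gap as $\big(\text{obj }\bx'\big)-\big(\text{obj }\overline{\bx}^*\big)\ge M\big(|\cR|-S(\overline{\bx}^*)\big)-T_{\max}\ge M-T_{\max}$. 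Choosing any $M>T_{\max}$ (for example $M=T_{\max}+1$, which simultaneously forces $M-T_{rv}>0$ as required by \eqref{eq:LPGU_obj}) makes this gap strictly positive, contradicting optimality of $\overline{\bx}^*$. Therefore $S(\overline{\bx}^*)=|\cR|$, so $\overline{\bx}^*$ satisfies every equality in $\cC_{\cR}$, is feasible for LP($G_U$), and by the first paragraph is optimal for LP($G_U$).

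The step I expect to be the main obstacle is exactly this quantification over $M$: the argument works only because total unimodularity forces $S$ to be integer-valued at the extreme points, creating a \emph{unit} separation $|\cR|-S(\overline{\bx}^*)\ge 1$ between full and non-full assignments; without integrality the shortfall $|\cR|-S$ could be arbitrarily small and no finite $M$ would dominate the cost term $T_{\max}$. If one wants the conclusion for \emph{every} optimal solution rather than a single extreme point, I would append the standard observation that the optimal face of $\overline{\textup{LP}}(G_U)$ is the convex hull of optimal extreme points, each of which has $S=|\cR|$ by the contradiction above; since $S$ is linear, every point of the optimal face also has $S=|\cR|$, extending optimality for LP($G_U$) to the whole optimal set.
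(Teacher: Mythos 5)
Your proof is correct and takes essentially the same route as the paper's: both rest on the relaxation inclusion $\cP \subseteq \overline{\cP}$, the observation that the $\overline{\textup{LP}}(G_U)$ objective equals $M|\cR|$ minus the $\textup{LP}(G_U)$ objective on the equality region, and---exactly as you flagged---the total-unimodularity integrality of the optimal extreme point, which creates the unit gap $|\cR| - S(\overline{\bx}^*) \geq 1$ that a large enough $M$ can dominate. The only cosmetic differences are that you argue by contradiction against an arbitrary feasible full-assignment point with $M = \sum_{(r,v)\in\cE} T_{rv} + 1$, whereas the paper compares optimal values directly and chooses the slightly sharper $M = \sum_{r\in\cR}\max\{T_{rv} : (r,v)\in\cE\} + \epsilon$; both are valid.
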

\begin{proof}
Let $\cP$ and $\overline{\cP}$ be the
polytopes defined by the constraints of LP($G_{U}$) and
$\overline{\textup{LP}}$\textup{($G_{U}$)}, respectively. Let $\bx^*$
and $\overline{\bx}^*$ be optimal extreme point solutions to
LP($G_{U}$) and $\overline{\textup{LP}}$\textup{($G_{U}$)},
respectively.  We have that $\bx^* \in \overline{\cP}$, since $\cP \subseteq
\overline{\cP}$ because constraints $\cC_{\cR}$ in
\eqref{eq:LPGU_cons1} are relaxed versions of those in \eqref{eq:LPG_cons1}.
Let $f(\bx)$ and $\overline{f}(\bx)$ be the objective function of
LP($G_{U}$) and $\overline{\textup{LP}}$\textup{($G_{U}$)},
respectively. We have
\begin{align}
    \overline{f}(\overline{\bx}^*) & \geq \overline{f}(\bx^*) \\
    \implies \sum_{(r, v) \in \cE} (M - T_{rv}) \overline{x}^*_{rv} &\geq \sum_{(r, v) \in \cE} (M - T_{rv}) x^*_{rv} \\
    \implies M \sum_{(r, v) \in \cE} (\overline{x}^*_{rv} - x^*_{rv}) &\geq \sum_{(r, v) \in \cE} T_{rv} (\overline{x}^*_{rv} - x^*_{rv} ) \\
    &= f(\overline{\bx}^*) - f(\bx^*).
\end{align}
The equality constraints in \eqref{eq:LPG_cons1} imply that $\sum_{(r, v) \in \cE} x^*_{rv} = |\cR|$, so that we get
\begin{align}
    M \Big(\sum_{(r, v) \in \cE} \overline{x}^*_{rv} - |\cR| \Big) &\geq f(\overline{\bx}^*) - f(\bx^*) \label{eq:lemma2-eq1} \\
    \implies \sum_{(r, v) \in \cE} \overline{x}^*_{rv} &\geq |\cR| - \frac{\big(f(\bx^*) - f(\overline{\bx}^*) \big)}{M}. \label{eq:lemma2-eq2}
\end{align}

\noindent The constraints $\cC_{\cR}$ in \eqref{eq:LPGU_cons1} imply that $\sum_{(r, v) \in \cE} \overline{x}^*_{rv} \leq |\cR|$, and
together with (\ref{eq:lemma2-eq2}) we have that $f(\bx^*) -
f(\overline{\bx}^*) \geq 0$. Now, if we select a value for $M$ such that it is
guaranteed that $M > f(\bx^*) - f(\overline{\bx}^*)$, then we have $\sum_{(r, v) \in \cE} \overline{x}^*_{rv} = |\cR|$ since
$\overline{\textup{LP}}$\textup{($G_{U}$)} returns an integral
optimal extreme point solution. When $\sum_{(r, v) \in \cE} \overline{x}^*_{rv} = |\cR|$, we have that $\overline{\bx}^* \in \cP$; and
equation (\ref{eq:lemma2-eq1}) implies that $ f(\overline{\bx}^*) \leq
f(\bx^*)$, which means $\overline{\bx}^*$ is optimal for LP($G_{U}$).
Now it remains to identify a suitable value for $M$ in order to complete the proof. We know that the minimum value $f(\overline{\bx}^*)$  can possibly take is $0$
and that the maximum value $f(\bx^*)$ can take is $\sum_{r \in \cR} \max
\{T_{rv} : (r, v) \in \cE \}$. Hence, if we set $M = \Big( \sum_{r \in
\cR} \max \{T_{rv} : (r, v) \in \cE \} \Big) + \epsilon$, where $\epsilon > 0$, then $\overline{\bx}^*$ that is optimal for $\overline{\textup{LP}}$\textup{($G_{U}$)} is also optimal for \textup{LP($G_{U}$)}.
\end{proof}

\subsection{Fleet operator and customer perspective}
\label[sec]{customer}
The travel time optimization in LP($G$) uses $T_{rv}$ as the cost, minimizing
which translates to reducing the overall travel times for the fleet and hence
is an optimization from the fleet operator's perspective. This might lead to
larger travel times for the customers since the optimization in essence tries
to enhance ridesharing, often at the expense of delay to the customers. A
straightforward approach to account for customer delays in the optimization is
to add a customer-centric cost component to the optimization. Similar to
$T_{rv}$, $\forall (r, v) \in \cE$ we define $W_{rv}$ to be the
maximum delay of all customers served by vehicle $v$ on the potential route plan $\Theta^r_v$. The delay for a request is calculated as
the difference between its expected travel time without ridesharing and the sum of travel time and pick-up time on the potential route plan. Now, instead of $T_{rv}$ as the cost, we could use a convex
combination of $T_{rv}$ and $W_{rv}$ as the new cost, that is, define the cost of
assigning $r$ to $v$ as 
\begin{equation}
  \widetilde{T}_{rv} := \Lambda T_{rv} + (1 - \Lambda) W_{rv},  
\end{equation}
\noindent where $\Lambda$ is a scaling parameter in the interval  $[0, 1]$.  Here, the optimization is fleet operator-centric
when $\Lambda$ is closer to $1$ and customer-centric when $\Lambda$ is closer
to $0$. 

\subsection{Greedy real-time assignment}
We will show that the \texttt{IA} algorithm efficiently solves dynamic
ridesharing problems considered. We will also consider a simpler ``greedy'' approach 
to dynamic ridesharing to compare the trade-off between the solution quality
and computational time.
This greedy procedure, derived from the heuristic in \citep{Gurumurthy2020} and summarized in \Cref{alg:greedy}, is executed
each time a request appears. First, it seeks to assign requests to
nearby vehicles while attempting to ensure all assigned requests are picked up
within the predefined threshold. When a request comes in, the algorithm finds
the nearest feasible vehicle: a vehicle that could complete the new request and
all other requests it is already serving, within the threshold pick-up times. (This
calculation of pick-up time uses Euclidean leg distances and an average
zone-based speed updated every hour to avoid computationally intensive router
calls.)
Second, \Cref{alg:greedy} seeks to assign requests to in-progress trips that are
going in the same direction. 
This is accomplished by comparing the angle between a vehicle's current position
to its destination and the vehicle's current position to the new request's
destination. If this angle is sufficiently small (in experiments we use a
threshold of 10 degrees), the vehicle is selected for matching. Once the match is
made, all ongoing pick-ups and drop-offs for a vehicle are reordered using the
heuristic nearest-neighbor search while constraining travelers to be dropped off
only after they are picked up.  

Once a trip has been assigned, a series of delay checks occur: at the end of
each pick-up or drop-off operation, all travelers' current delay is verified. If
any traveler is close to experiencing a delay greater than $D^T_r$, that vehicle
stops accepting requests and does not show up in other queries until all
passengers have been dropped off. While this is computationally beneficial, it
does not provide a clear upper bound on $D^T_r$ but instead only limits
passenger delays from being unbounded.

Previous uses of this greedy approach \citep{Gurumurthy2021, Gurumurthy2021a} have shown to improve average vehicle occupancy and fleet efficiency and help decrease system-wide vehicle miles traveled. By comparing the proposed method in this study to a benchmark implemented within the same simulation framework (POLARIS), we will show that the \texttt{IA} efficiently solves large-scale dynamic ridesharing problem instances. 
% \begin{algorithm}[h]
% \begin{algorithmic}[1]
% \State Input $r$s origin $r^o$ and destination $r^d$
% \While{$r$ not assigned}
%     \State Query vehicle $v$ closest to $r^o$
%     \If{$v$ is idle}
%         \State Assign $r$ to $v$
%     \Else
%         \If{$v$ has seats available}
%             \If{$v$ is en route for a pickup}
%                 \If{$r$ can be picked up first without delaying $v$s original pickup}
%                     \State Assign $r$ to $v$
%                 \Else
%                     \State Exclude $v$ from future nearest vehicle query for $r$
%                 \EndIf 
%             \ElsIf{$v$ is en route for a drop off}
%                 \State Let $v^{pos}$ be $v$s current location and $v^d$ be the drop off location
%                 \If{Direction $v^{pos} \rightarrow v^d$ and $v^{pos} \rightarrow r^d$ are within $10^\circ$}
%                     \If{$r$ can be picked up before the drop off without delaying $v$s passengers}
%                         \State Assign $r$ to $v$
%                     \Else
%                         \State Exclude $v$ from future nearest vehicle query for $r$
%                     \EndIf
%                 \Else
%                     \State Exclude $v$ from future nearest vehicle query for $r$
%                 \EndIf
%             \EndIf
%         \Else
%             \State Exclude $v$ from future nearest vehicle query for $r$
%         \EndIf    
%     \EndIf
% \EndWhile
% \end{algorithmic}
% \end{algorithm}

\begin{algorithm2e}[t]
  \caption{Greedy real-time assignment for a given request $r$ \label{alg:greedy}}
  \fontsize{9}{9}\selectfont
	\DontPrintSemicolon 
	\SetAlgoNlRelativeSize{-5}
	\SetKw{true}{true}
	\SetKw{break}{break}

  \While{$r$ \tn{not assigned}}
  {
    Query nearest vehicle $v$ to the request's origin $r^o$

    \uIf{$v$ \tn{is idle}}
    {
        Assign $r$ to $v$
        }
    \Else
    {
        \uIf{$v$ \tn{has seats available}}
        {
                Let $v^{pos}$ be $v$'s current location and $v^d$ be its final destination

                \uIf{\tn{The angle between directions} $v^{pos} \rightarrow v^d$
                \tn{and} $v^{pos} \rightarrow r^d$ \tn{is small}}
                {
                    \uIf{$r$ \tn{and its already assigned travelers can be picked up within pick-up threshold} $D^P$}
                    {
                        Assign $r$ to $v$
                        }
                    \Else
                    {
                        Exclude $v$ from future nearest vehicle query for $r$
                        }
                    
                }
                \Else{
                    Exclude $v$ from future nearest vehicle query for $r$
                }
        }
        \Else
        {
            Exclude $v$ from future nearest vehicle query for $r$
        }
    }
    }
\end{algorithm2e}

% \begin{algorithm}[h]
% \caption{Greedy real-time assignment for a given request $r$}
% \label{alg:greedy}
% \begin{algorithmic}[1]
% \State Input $r$'s origin $r^o$ and destination $r^d$
% \While{$r$ not assigned}
%     \State Query vehicle $v$ closest to $r^o$
%     \If{$v$ is idle}
%         \State Assign $r$ to $v$
%     \Else
%         \If{$v$ has seats available}
%             \If{$v$ is en route for a pickup}
%                 \If{$r$ can be picked up first without delaying $v$'s original pickup}
%                     \State Assign $r$ to $v$
%                 \Else
%                     \State Exclude $v$ from future nearest vehicle query for $r$
%                 \EndIf 
%             \ElsIf{$v$ is en route for a drop off}
%                 \State Let $v^{pos}$ be $v$'s current location and $v^d$ be the drop off location
%                 \If{Direction $v^{pos} \rightarrow v^d$ and $v^{pos} \rightarrow r^d$ are within $10^\circ$}
%                     \If{$r$ can be picked up before the drop off without delaying $v$'s passengers}
%                         \State Assign $r$ to $v$
%                     \Else
%                         \State Exclude $v$ from future nearest vehicle query for $r$
%                     \EndIf
%                 \Else
%                     \State Exclude $v$ from future nearest vehicle query for $r$
%                 \EndIf
%             \EndIf
%         \Else
%             \State Exclude $v$ from future nearest vehicle query for $r$
%         \EndIf    
%     \EndIf
% \EndWhile
% \end{algorithmic}
% \end{algorithm}

\FloatBarrier
\section{Numerical Experiments}\label[sec]{experiments}
In this section we carry out multiple experiments on a POLARIS model of the city of Austin, Texas, United States. We compare the IA with the greedy approach, then study the effect of fleet size and key parameters.

\subsection{Scenario setting}
We focus our study on the areas closer to the city of Austin and its downtown area where the requests are concentrated. Figure \cref{austin} shows the Austin network. The light gray area depicts the excluded zones while the green area shows the zones included in the geofence. This network contains more than 40K links and 17K nodes. The network used is based on the local Capital Area Metropolitan Planning Organization (CAMPO) and comprises a large area containing 8 counties. 

\begin{figure}
\centering
 \includegraphics[width=0.6\textwidth]{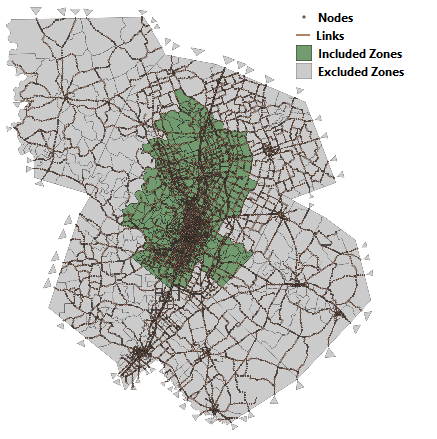}
 \caption{Austin network.}
 \label[fig]{austin}
\end{figure}

We generate the demand following the POLARIS agent-based modeling framework. The demand is an outcome of the population synthesis and its travel decisions throughout the day. The model's agents decide what activity they will do each day (work, shopping, personal, etc.) and for how long, as well as where they will do it. Based on the activity plan, agents then decide departure times and modes based on the expected travel times for each mode and their availability. Model parameters are calibrated to match the observed outcomes regarding mode shares, trip distances, departure time distributions, and the number of activities performed. We refer to the appendix in \cite{dean2022synergies} for a comparison between the simulated and observed metrics in the CAMPO area.

For this study we recorded all the TNC/taxi trips in the target area and kept the same request pattern for all the simulations. This corresponds to 153,323 requests in a 24-hour period. We also keep the same set of trips of 4.5M single-occupant vehicles  as background traffic. We run all simulations with a traffic time step of 2 seconds.

Unless otherwise stated, the batching period $B$ is 30 seconds; the number of potential to consider $N$ is 10\% of the fleet size; the objective weight $\Lambda$ is 0.5; and the maximum allowed pick-up delay for any request $D^\textrm{P}$ and the maximum allowed travel delay $D^T_r$ are 30 minutes. These are relatively high maximum allowable delays, but we keep these values because they were used for the greedy approach when calibrating the model. Moreover, the results will show that the average delays are typically much smaller than the maximum delays.

\subsection{Fleet size comparison with the greedy assignment}
In the first set of experiments, we compare the outcomes of the proposed iterative scheme with the greedy approach for different fleet sizes ranging from 1,500 to 3,000 vehicles.

\cref{fleetsize} depicts the total VHT (left), the share of requests served (middle), and the wait times (right) for the greedy (blue) and the IA (orange) for different fleet sizes. The IA yields lower VHT for all fleet sizes with a marginally higher share of requests served. The only exception is the fleet size of 3,000 vehicles in which the greedy method serves a slightly higher number of requests. Conversely, the greedy approach can keep lower wait times for all cases.

\begin{figure}[h]
\centering
 \includegraphics[width=\textwidth]{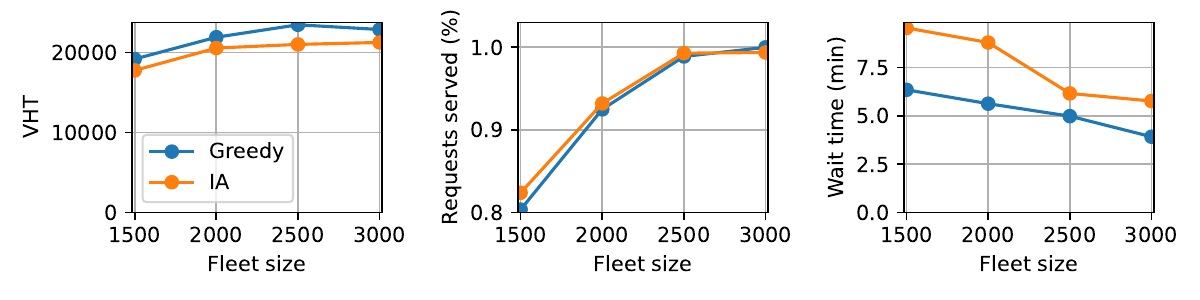}
 \caption{Comparison between the IA and the greedy approach for different fleet sizes.}
 \label[fig]{fleetsize}
\end{figure}

The complete set of metrics is reported in \cref{fleet}. The fleet size denotes the number of vehicles. The greedy approach yields lower wait and journey times, which is an outcome of its behavior of (i) assigning requests generally to the closest available vehicle, which leads to lower waiting times. and (ii) because the closest vehicle is often not a suitable match for pooling, which leads to lower journey time. However, it requires more travel per request on average and serves slightly fewer requests for lower fleet sizes. The IA processes 30-second batches in less than half a second on average. This result proves that the method is efficient and suitable for real-time use in this scenario. Note that CPU time is not reported for the greedy approach: thbatching interval concept is not needed for the greedy approach and the CPU time is negligible because the method can be is as quick as retrieving the closest vehicle from a double-ended queue (in the best case) and checking multiple vehicles (in the worst case).

\begin{table}[h]
\footnotesize
%     \begin{adjustbox}{width=\textwidth,center}
    % \begin{adjustbox}{center}
    {\begin{tabular*}\textwidth{c@{\extracolsep{\fill}}cccccc}
            \toprule
            Fleet size & Strategy & Served (\%)& Travel per req. (min) & Journey time (min) & Wait time (min) & CPU (s)\\ 
            % \multicolumn{1}{c}{(veh)} & \multicolumn{1}{c}{ }& \multicolumn{1}{c}{\%} & \multicolumn{1}{c}{(min)} &  \multicolumn{1}{c}{(min)} &  \multicolumn{1}{c}{(min)} &  \multicolumn{1}{c}{(s)} \\ 
            \midrule 
            \multirow{3}{*}{1,500} 
            & \multicolumn{1}{l}{Greedy} & \multicolumn{1}{c}{80.36} & \multicolumn{1}{c}{9.36}  & \multicolumn{1}{c}{20.94} &              \multicolumn{1}{c}{6.35} & \multicolumn{1}{c}{} \\\cline{2-7}
                                 & \multicolumn{1}{l}{IA} & \multicolumn{1}{c}{82.40} & \multicolumn{1}{c}{8.44}  & \multicolumn{1}{c}{25.44} & \multicolumn{1}{c}{9.54} & \multicolumn{1}{c}{0.39} \\\cline{2-7}
                                 & \multicolumn{1}{l}{$\Delta \%$} & \multicolumn{1}{c}{+2.04} & \multicolumn{1}{c}{-9.82}  & \multicolumn{1}{c}{+21.48} & \multicolumn{1}{c}{+50.23} & \multicolumn{1}{c}{} \\\midrule
            \multirow{3}{*}{2,000} & \multicolumn{1}{l}{Greedy} & \multicolumn{1}{c}{92.49} & \multicolumn{1}{c}{9.28} & \multicolumn{1}{c}{19.56} & \multicolumn{1}{c}{5.63} & \multicolumn{1}{c}{} \\\cline{2-7}
                                 & \multicolumn{1}{l}{IA} & \multicolumn{1}{c}{93.20} & \multicolumn{1}{c}{8.64}  & \multicolumn{1}{c}{24.58} & \multicolumn{1}{c}{8.70} & \multicolumn{1}{c}{0.36} \\\cline{2-7}
                                 & \multicolumn{1}{l}{$\Delta \%$} & \multicolumn{1}{c}{+0.71} & \multicolumn{1}{c}{-6.89} & \multicolumn{1}{c}{+25.66} & \multicolumn{1}{c}{+54.52} & \multicolumn{1}{c}{} \\\midrule
            \multirow{3}{*}{2,500} & \multicolumn{1}{l}{Greedy} & \multicolumn{1}{c}{98.89} & \multicolumn{1}{c}{9.30}  & \multicolumn{1}{c}{18.06} & \multicolumn{1}{c}{4.98} & \multicolumn{1}{c}{} \\\cline{2-7}
                                 & \multicolumn{1}{l}{IA} & \multicolumn{1}{c}{99.29} & \multicolumn{1}{c}{8.29}  & \multicolumn{1}{c}{19.32} & \multicolumn{1}{c}{6.16} & \multicolumn{1}{c}{0.35} \\\cline{2-7}
                                 & \multicolumn{1}{l}{$\Delta \%$} & \multicolumn{1}{c}{+0.40} & \multicolumn{1}{c}{-10.86}  & \multicolumn{1}{c}{+6.97} & \multicolumn{1}{c}{23.69} & \multicolumn{1}{c}{} \\\midrule
            \multirow{3}{*}{3,000} & \multicolumn{1}{l}{Greedy} & \multicolumn{1}{c}{99.99} & \multicolumn{1}{c}{8.96}  & \multicolumn{1}{c}{15.81} & \multicolumn{1}{c}{3.92} & \multicolumn{1}{c}{} \\\cline{2-7}
                                 & \multicolumn{1}{l}{IA} & \multicolumn{1}{c}{99.37} & \multicolumn{1}{c}{8.38}  & \multicolumn{1}{c}{18.64} & \multicolumn{1}{c}{5.76} & \multicolumn{1}{c}{0.36} \\\cline{2-7}
                                 & \multicolumn{1}{l}{$\Delta \%$} & \multicolumn{1}{c}{-0.62} & \multicolumn{1}{c}{-6.47}  & \multicolumn{1}{c}{+17.90} & \multicolumn{1}{c}{+46.93} & \multicolumn{1}{c}{} \\
                                 \bottomrule
        \end{tabular*}}
%     \end{adjustbox}
%     \vspace{ - 05 mm}
    \caption{Key metrics for different fleet sizes using the IA and the greedy approach.}
    \label[tab]{fleet}
\end{table}

The reason for the higher wait time on the IA is that it maintains a higher vehicle occupancy than does the greedy approach. By increasing pooling with requests with fewer detours, the IA can reduce the total VHT at the expense of higher wait times. This pattern is clearer in assessing the time-series comparison. \cref{temporalgreedyia} depicts average occupancy (left), time spent on the network (middle), and average wait time (right) over the 24-hour period. Outside the peak, the proposed IA strategy consistently keeps higher vehicle occupancy. This results in slightly slower time spent in the network, higher wait time, and lower fleet size to serve the same demand.

The key performance distinction occurs during the peak period. One can observe in the left graph in \cref{temporalgreedyia} that the occupancy for both strategies spikes at around 7AM. The greedy strategy keeps the occupancy around 1.5 until noon, whereas the IA exceeds two people per vehicle. This results in an overall fewer vehicles used and therefore less time spent in the network over that period, which is clear in the middle graph. The IA strategy serves the same amount of requests while yielding approximately 8\% less VHT (see \cref{fleetsize}).

\begin{figure}[h]
\centering
 \includegraphics[width=\textwidth]{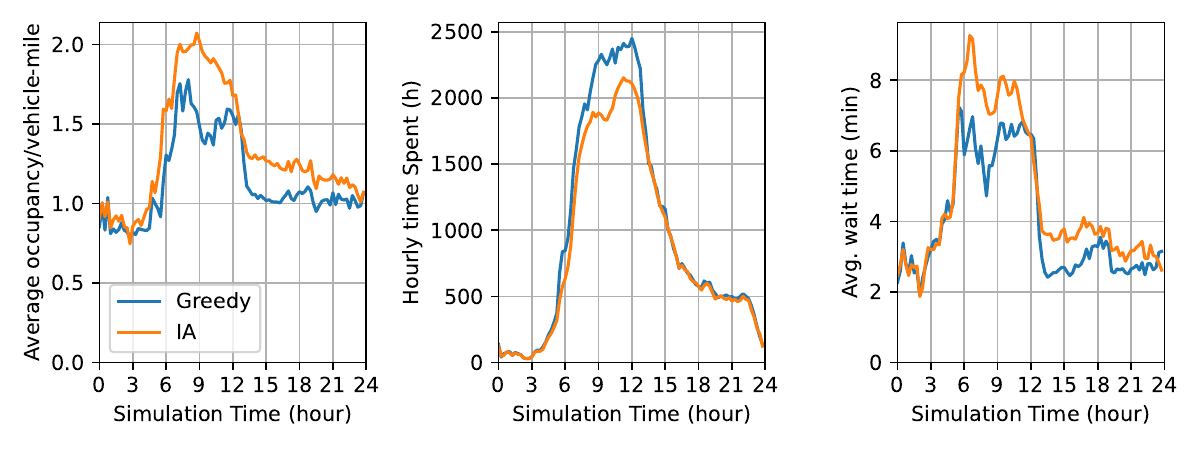}
 \caption{Temporal comparison between the IA and the greedy approach for 2,500 -vehicle case.}
 \label[fig]{temporalgreedyia}
\end{figure}

The IA strategy is aware of the need to increase pooling when there are fewer vehicles, as evidenced by the vehicle occupancy for different fleet sizes. \cref{temporalgreedyia} depicts the same metrics (occupancy, time spent, and wait time from left to right) for the IA strategy for four different fleet sizes. The average occupancy decreases following the peak earlier, the higher the fleet size is. The reduction in occupancy results in lower wait time and higher time spent during that period.

\subsection{Effect of the number of vehicles considered per request}
The IA strategy takes $N$, the number of potential vehicles to consider for each request, as an input parameter. Reducing $N$ saves computational time as it potentially decreases the number of decision variables in the optimization problem and the number of routes to be computed. On the other hand,  a higher number of vehicles per request gives more flexibility to the solver to find better solutions. In this section we assess the trade-offs for $N$ by assessing the results for $N \in \{125, 250, 500\}$ for a fleet of 2,500 vehicles. 

\cref{metricsnstudy} shows the VHT (left), the share of requests served (middle), and the average wait time (right) for the three different fleet sizes. For all three metrics, the performance changes are marginal. Increasing $N$ leads to more requests served and lower wait time, although the total VHT is slightly higher for bigger $N$. In terms of VHT per request, the results are also favorable for the $N=500$ case since it served more requests. 

\begin{figure}[h]
\centering
 \includegraphics[width=\textwidth]{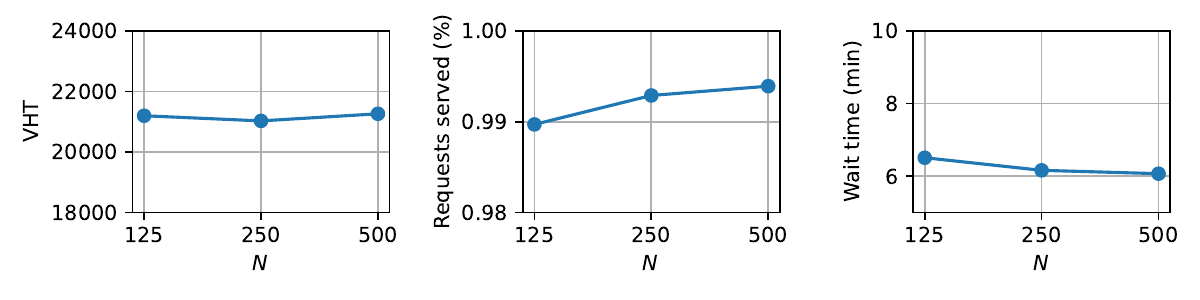}
 \caption{VHT (left), share of requests served (middle), and average wait time (right) as a function of vehicles considered per request ($N$).}
 \label[fig]{metricsnstudy}
\end{figure}

\cref{nperrequest} provides further insights for this analysis. Even though travel per request goes up with $N=500$, this case has the shortest journey and wait time compared with other cases. As expected, the CPU time increases for higher $N$. Nevertheless, the average CPU time is still low enough for real-time operation for batches of 30 seconds for this scenario.

\begin{table}[h]
\footnotesize
        {\begin{tabular*}\textwidth{c@{\extracolsep{\fill}}cccccc}
            \toprule
            Fleet size & $N$ & Served (\%) & Travel per req. (min) & Journey time (min) & Wait time (min) & CPU time (s)\\ 
            % \multicolumn{1}{c}{(veh)} & \multicolumn{1}{c}{ }& \multicolumn{1}{c}{\%} & \multicolumn{1}{c}{(min)} &  \multicolumn{1}{c}{(min)} &  \multicolumn{1}{c}{(min)} &  \multicolumn{1}{c}{(s)} \\ 
            \midrule 

            \multirow{3}{*}{2,500} & \multicolumn{1}{l}{125} & \multicolumn{1}{c}{98.97} & \multicolumn{1}{c}{8.39}  & \multicolumn{1}{c}{20.64} & \multicolumn{1}{c}{6.50} & \multicolumn{1}{c}{0.16} \\\cline{2-7}
                                 & \multicolumn{1}{l}{250} & \multicolumn{1}{c}{99.29} & \multicolumn{1}{c}{8.29}  & \multicolumn{1}{c}{19.32} & \multicolumn{1}{c}{6.16} & \multicolumn{1}{c}{0.35} \\\cline{2-7}
                                 & \multicolumn{1}{l}{500} & \multicolumn{1}{c}{99.39} & \multicolumn{1}{c}{8.38}  & \multicolumn{1}{c}{18.81} & \multicolumn{1}{c}{6.07} & \multicolumn{1}{c}{0.54} \\
                                 \bottomrule

        \end{tabular*}}
%     \end{adjustbox}
%     \vspace{ - 05 mm}
    \caption{Key metrics for vehicles per request using the IA strategy.}
    \label[tab]{nperrequest}
\end{table}

\subsection{Effect of objective function weight}
We perform a similar comparison for the objective function parameter $\Lambda$ that weights between service- ($\Lambda \to 1$) and customer ($\Lambda \to 0$)-centric operation. To that end, we perform experiments for $\Lambda \in \{0.25, 0.5, 0.75\}$.

For this parameter sensitivity, the results are salient. \cref{metricslambdastudy} shows the VHT (left), the share of requests served (middle), and the average wait time (right) for the three values of $\Lambda$. Increasing $\Lambda$ leads to a significant decrease in the total VHT. On the other hand, there is a decrease in the number of requests served, and the wait time for the case of $\Lambda=0.75$ is around 10\% higher than the other two values of $\Lambda$

\begin{figure}[h]
\centering
 \includegraphics[width=\textwidth]{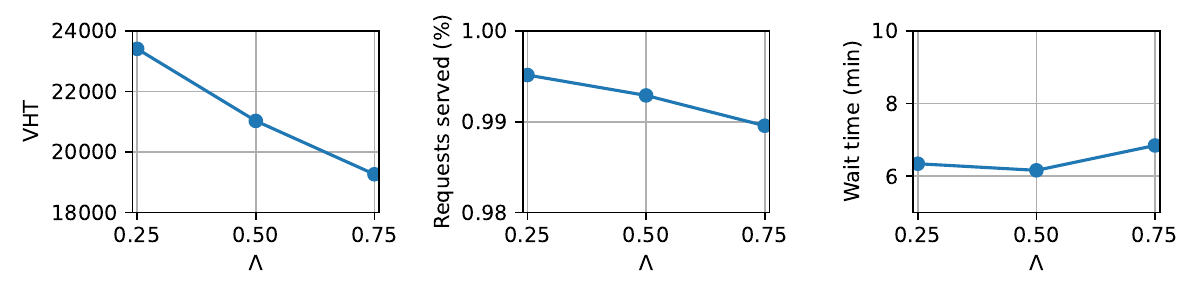}
 \caption{VHT (left), share of requests served (middle), and average wait time (right) as function of objective function weight $\Lambda$.}
 \label[fig]{metricslambdastudy}
\end{figure}

The reason behind the changes in the VHT for higher values of $\Lambda$ is the higher amount of pooling achieved in these cases. This is clear in the temporal evolution of vehicle occupancy (left), time spent in the network (middle), and average wait time (right) in \cref{temporallambda}. The average occupancy is substantially higher values of $\Lambda$, which in turn leads to lower VHT. Nevertheless, the average occupancy for $\Lambda=0.5$ is smaller than the the average occupancy for $\Lambda=0.75$, but that does not lead to significant changes in VHT during the peak period. On the other hand, the $\Lambda=0.25$ case leads to a significantly higher VHT at that period.

\begin{figure}[h]
\centering
 \includegraphics[width=\textwidth]{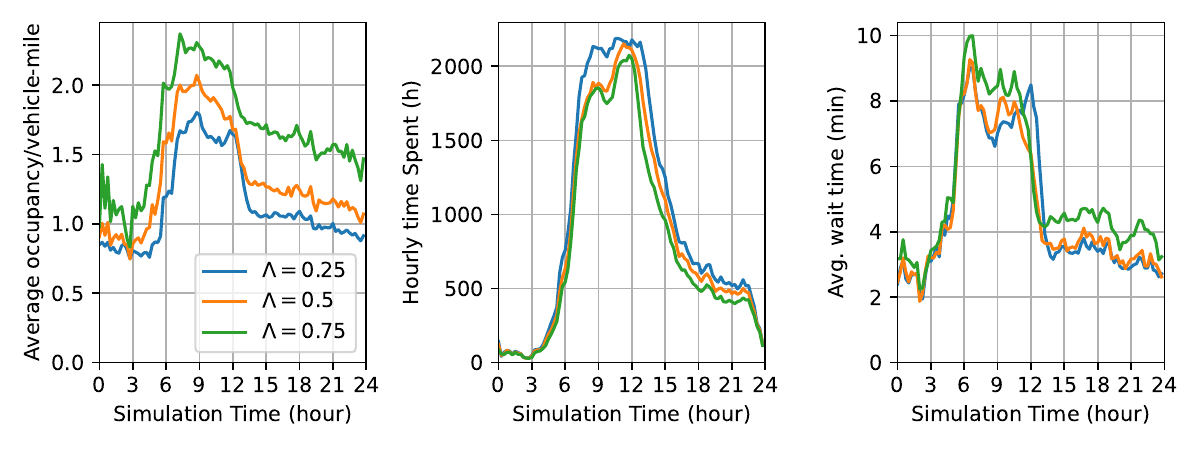}
 \caption{Average occupancy (left), hourly time spent (middle), and average wait time (right) for $\Lambda \in \{0.25, 0.5, 0.75 \}$}
 \label[fig]{temporallambda}
\end{figure}

 \cref{objectiveweight} presents the share of served requests, travel time per request, and journey, wait, and CPU times for the three different values of $\Lambda$.

\begin{table}[h]
\footnotesize
        {\begin{tabular*}\textwidth{c@{\extracolsep{\fill}}cccccc}
            \toprule
            Fleet size & $\Lambda$ & Served (\%)& Travel per req. (min) & Journey time (min) & Wait time (min) & CPU time (s)\\ 
            % \multicolumn{1}{c}{(veh)} & \multicolumn{1}{c}{ }& \multicolumn{1}{c}{\%} & \multicolumn{1}{c}{(min)} &  \multicolumn{1}{c}{(min)} &  \multicolumn{1}{c}{(min)} &  \multicolumn{1}{c}{(s)} \\ 
            \hline 

            \multirow{3}{*}{2,500} & \multicolumn{1}{l}{0.25} & \multicolumn{1}{c}{99.51} & \multicolumn{1}{c}{9.21}  & \multicolumn{1}{c}{19.03} & \multicolumn{1}{c}{6.34} & \multicolumn{1}{c}{0.36} \\\cline{2-7}
                                 & \multicolumn{1}{l}{0.50} & \multicolumn{1}{c}{99.29} & \multicolumn{1}{c}{8.29}  & \multicolumn{1}{c}{19.32} & \multicolumn{1}{c}{6.16} & \multicolumn{1}{c}{0.35} \\\cline{2-7}
                                 & \multicolumn{1}{l}{0.75} & \multicolumn{1}{c}{98.96} & \multicolumn{1}{c}{7.62}  & \multicolumn{1}{c}{21.62} & \multicolumn{1}{c}{6.84} & \multicolumn{1}{c}{0.32} \\\cline{1-7}

        \end{tabular*}}
%     \end{adjustbox}
%     \vspace{ - 05 mm}
    \caption{Key metrics for vehicles per request using the IA strategy for different $\Lambda$ values.}
    \label[tab]{objectiveweight}
\end{table}

\subsection{Effect of the batching interval}
The batching interval $B$ has a potential impact on the solution quality. It does not influence the modeling; however, the set of requests for each batch is dependent upon $B$. A shorter batching interval means that each request is processed earlier, and it can lead to lower wait times for the same assignment set. On the other hand, the solver is provided less flexibility for making decisions and therefore may make worse decisions. In this study we test $B \in {15,30,60}$ seconds.

\cref{metricsbatchingstudy} shows the VHT (left), share of requests served (right), and average waiting time (right). Overall, the batching interval is the lever with the least variations in outcome. Nevertheless, there are interesting aspects to it. Increasing the batching interval leads to a VHT decrease, which was expected. However, the change in wait time is marginal and much lower than the average time for processing the batch (half of the batching interval). Therefore, the highest batching intervals would be advised in this particular case.

\begin{figure}[h]
\centering
 \includegraphics[width=\textwidth]{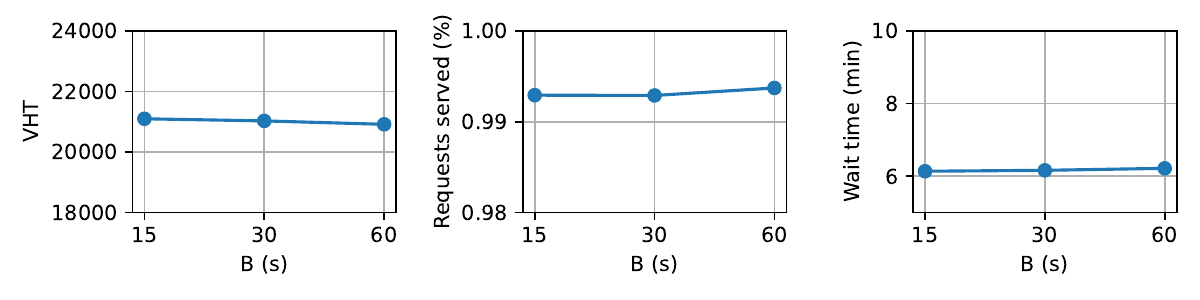}
 \caption{VHT (left), share of requests served (middle), and average wait time (right) as function of batching interval (B).}
 \label[fig]{metricsbatchingstudy}
\end{figure}

\cref{batchinterval} shows the share of served vehicles, travel per request, journey time, wait time, and CPU time for different batching intervals. The table also supports that $B$ does not have a considerable impact on the key performance metrics, yet it plays a critical role in the CPU time.

\begin{table}[h]
\footnotesize
        {\begin{tabular*}\textwidth{c@{\extracolsep{\fill}}cccccc}
            \toprule
            Fleet size & $B$ & Served (\%) & Travel per req. (min) & Journey time (min)& Wait time (min)& CPU time (s)\\ 
            % \multicolumn{1}{c}{(veh)} & \multicolumn{1}{c}{(s) }& \multicolumn{1}{c}{\%} & \multicolumn{1}{c}{(min)} &  \multicolumn{1}{c}{(min)} &  \multicolumn{1}{c}{(min)} &  \multicolumn{1}{c}{(s)} \\ 
            \midrule 

            \multirow{3}{*}{2,500} & \multicolumn{1}{l}{15} & \multicolumn{1}{c}{99.29} & \multicolumn{1}{c}{8.32}  & \multicolumn{1}{c}{19.39} & \multicolumn{1}{c}{6.16} & \multicolumn{1}{c}{0.16} \\\cline{2-7}
                                 & \multicolumn{1}{l}{30} & \multicolumn{1}{c}{99.29} & \multicolumn{1}{c}{8.29}  & \multicolumn{1}{c}{19.32} & \multicolumn{1}{c}{6.16} & \multicolumn{1}{c}{0.35} \\\cline{2-7}
                                 & \multicolumn{1}{l}{60} & \multicolumn{1}{c}{99.37} & \multicolumn{1}{c}{8.24}  & \multicolumn{1}{c}{19.11} & \multicolumn{1}{c}{6.21} & \multicolumn{1}{c}{0.80} \\
                                 \bottomrule

        \end{tabular*}}
%     \end{adjustbox}
%     \vspace{ - 05 mm}
    \caption{Key metrics for vehicles per request using the IA strategy for different batching interval.}
    \label[tab]{batchinterval}
\end{table}

\section{Conclusion}\label[sec]{conclusion}
The DRS is a challenging problem due to the impact of service order complexity on the system performance. Moreover, the two parties involved in the problem---riders and operators---pursue different objectives. In this study we developed an optimization model considering a balanced objective to solve the DRS problem. In this problem, time-to-solution is one of the key performance metrics because vehicle assignments need to be handled quickly to satisfy riders. To this end, we proposed the IA approach for the DRS problem. The problem is cast in a series of linear programs to assign riders to vehicles. We prove the optimality of the approach even in the case of multiple party sizes. Moreover, the method is computationally efficient since it relies on linear programming. As a benchmark, we develop a greedy heuristic solution and compare the computational performance and key solution quality metrics. The IA implementation compared with the greedy method in Austin, TX, showed shorter journey time and wait time with the expense of CPU time increase.

In the studies around key parameters, we found the objective function weight is the most significant in changing the outcomes. A service-centric objective leads to lower VHT at the expense of higher average wait times. The batching interval and the number of vehicles per request have some impacts, but they are limited. The key mechanism for the different outcomes in the objective function is the average occupancy. A higher weight $\Lambda$ on the total travel time induces more pooling, which reduces VHT and increases wait and journey times.

In this study we did not consider repositioning of the vehicles. In a DRS network, the distribution of vehicles can impact the overall service quality. One may target repositioning a vehicle from its low-demand last service destination to a high-demand destination to increase the probability of a better match. While this unoccupied travel incurs cost and time, it may increase customer satisfaction with faster service. The DRS problem could incorporate repositioning decisions while making assignments.

Fossil fuel vehicles leave a large footprint on the environment, and governments now have global goals toward net-zero emissions. Vehicle electrification is a top priority among actions toward achieving these goals. In this study we did not schedule charging activities, which is a challenging task and adds to the complexity of the DRS problem. Future studies should consider these aspects to address the electrification in this industry.

\section*{Acknowledgments}
This material is based upon work supported by the U.S. Department of Energy, Office of Science, under contract number DE-AC02-06CH11357. 
This report and the work described were sponsored by the U.S. Department of Energy (DOE) Vehicle Technologies Office (VTO) under the Systems and Modeling for Accelerated Research in Transportation (SMART) Mobility Laboratory Consortium, an initiative of the Energy Efficient Mobility Systems (EEMS) Program. 
Erin Boyd, a DOE Office of Energy Efficiency and Renewable Energy (EERE) manager, played an important role in establishing the project concept, advancing implementation, and providing guidance. The authors remain responsible for all findings and opinions presented in the paper. The findings are not suggestions for agencies to implement given the assumptions made in this study.

\clearpage
\bibliographystyle{elsarticle-harv} 
\bibliography{refs}

\vfill
\framebox{\parbox{.90\linewidth}{\scriptsize The submitted manuscript has been created by
        UChicago Argonne, LLC, Operator of Argonne National Laboratory (``Argonne'').
        Argonne, a U.S.\ Department of Energy Office of Science laboratory, is operated
        under Contract No.\ DE-AC02-06CH11357.  The U.S.\ Government retains for itself,
        and others acting on its behalf, a paid-up nonexclusive, irrevocable worldwide
        license in said article to reproduce, prepare derivative works, distribute
        copies to the public, and perform publicly and display publicly, by or on
        behalf of the Government.  The Department of Energy will provide public access
        to these results of federally sponsored research in accordance with the DOE
        Public Access Plan \url{http://energy.gov/downloads/doe-public-access-plan}.}}
\end{document}